\documentclass{amsart}
\usepackage{amsthm,amssymb,latexsym,amsmath, ae}
\usepackage{tikz}
\usepackage{enumerate,url}

\def \C {\mathbb{C}}

\newcommand{\p}{\Bbb{P}}

\def \F {{\mathcal F}}
\newcommand{\OO}{\mathcal{O}}

\def \R {{\mathbb R}}

\newtheorem{proposition}{Proposition}[section]
\newtheorem{definition}[proposition]{Definition}
\newtheorem{corollary}[proposition]{Corollary}
\newtheorem{lemma}[proposition]{Lemma}
\newtheorem{theorem}[proposition]{Theorem}

\newtheorem{remark}[proposition]{Remark}

\begin{document}

\title[Poincar\'e problem for weighted projective foliations]{Poincar\'e problem for weighted projective foliations}

\author{F. E. Brochero Mart\'inez}

\address{F. E. Brochero Mart\'inez \\
Departamento de Matem\'atica \\
Universidade Federal de Minas Gerais\\
Av. Ant\^onio Carlos 6627 \\
30123-970 Belo Horizonte MG, Brazil} \email{fbrocher@mat.ufmg.br}

\author{M.  Corr\^ea Jr. }
\thanks{Partially supported by CNPq grant number 300352/2012-3 and FAPEMIG grant number PPM-00169-13.}
\address{\noindent Maur\' \i cio Corr\^ea Jr\\
Departamento de Matem\'atica \\
Universidade Federal de Minas Gerais\\
Av. Ant\^onio Carlos 6627 \\
30123-970 Belo Horizonte MG, Brazil} \email{mauricio@mat.ufmg.br}

\author{A. M. Rodr\'iguez  }

\address{A. M. Rodr\'iguez  \\
Departamento de Matem\'atica \\
Universidade Federal de Minas Gerais\\
Av. Ant\^onio Carlos 6627 \\
30123-970 Belo Horizonte MG, Brazil} \email{miguel.rodriguez.mat@gmail.com}

\subjclass[2010]{Primary 32S65} \keywords{ Holomorphic foliations}

\begin{abstract}
We give a  bounding of  degree of  quasi-smooth hypersurfaces  which are invariant
by a one dimensional holomorphic foliation of a given degree on a
weighted projective space.
\end{abstract}
\maketitle

\section{Introduction}

Henri  Poincar\'e  studied in \cite{Po} the problem  to decide if a holomorphic
foliation $\F$ on the complex projective plane
$\mathbb{P}^2$ admits a rational first integral. 
Poincar\'e  observed that in order to solve this problem is sufficient
to find a bound for the degree of the generic curve  invariant by $\F$. In
general, this is not possible, but doing some hypothesis we obtain
an affirmative answer for this problem, which nowadays is known as
$\emph{Poincar\'e  Problem}$. This problem was treated by D. Cerveau and A. Lins Neto \cite{C-LN}.
M. Brunella in \cite{B1} observed that obstruction  to the
positive solution to Poincar\' e problem is given by the GSV index. There exist several works about Poincar\' e problem and its generalizations, see for instance the papers:    M. Carnicer \cite{Ca}, J. V. Pereira  \cite{Pe} , M . Brunella and L.G. Mendes \cite{BM}, E. Esteves and S. Kleiman \cite{EK}.

M. Soares in  \cite{So} proved the following Theorem for smooth hypersurfaces invariant by foliations on $\mathbb{P}^ n$ .

\begin{theorem} \cite{So} 
Let $\mathcal{F}$ be a holomorphic one dimensional  foliation   on
$\mathbb{P}^ n$  with
isolated singularities. If  $V \subset  \mathbb{P}^ n$ is  a
smooth hypersurface invariant by  $\mathcal{F}$, then 
$$
\deg(V)\leq \deg(\mathcal{F})+1.
$$
\end{theorem}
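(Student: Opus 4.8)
\medskip
\noindent\emph{Proof strategy.}
The plan is to restrict $\mathcal{F}$ to $V$, compare the resulting singular scheme with $\singf$ by two index formulas, and exclude a borderline degree by a vanishing argument on the normal bundle of $V$. Concretely, fix a homogeneous vector field $X=\sum_{i=0}^{n}a_i\,\partial/\partial z_i$ on $\mathbb{C}^{n+1}$ representing $\mathcal{F}$, with $\deg a_i=d$ (equivalently a section $X\in H^0(\pn,T\pn(d-1))$), and write $V=\{F=0\}$ with $F$ homogeneous of degree $k=\deg(V)$, so that invariance reads $X(F)=\theta F$ for a homogeneous $\theta$ of degree $d-1$. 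Since $V$ is smooth and invariant, $X$ is tangent to $V$ and induces a one-dimensional foliation $\mathcal{G}:=\mathcal{F}|_{V}$ on $V$ with $T\mathcal{G}=\OO_{V}(1-d)$; because $V$ is smooth, a point of $V$ is singular for $\mathcal{G}$ exactly when $X$ (viewed as a section of $T\pn(d-1)$) vanishes there, so that $\sing(\mathcal{G})=\singf\cap V$ is finite and Poincar\'e--Hopf applies to $\mathcal{G}$.

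I would then prove the upper estimate. At $p\in\sing(\mathcal{G})$, choosing local coordinates with $V=\{x_1=0\}$ and $X=x_1 g\,\partial/\partial x_1+\sum_{j\ge 2}h_j\,\partial/\partial x_j$ (invariance), the foliation $\mathcal{G}$ is cut out near $p$ by $(h_2|_{V},\dots,h_n|_{V})$, and the surjection $\OO_p/(x_1 g,h_2,\dots,h_n)\twoheadrightarrow\OO_p/(x_1,h_2,\dots,h_n)$ gives $\mu_p(\mathcal{G})\le\mu_p(\mathcal{F})$ (the defect being the $\mathrm{GSV}$ index, in line with Brunella's remark quoted above). Summing this over $\sing(\mathcal{G})\subseteq\singf$ and applying Poincar\'e--Hopf on $V$ and on $\pn$ gives
\[
\int_{V}c_{n-1}\bigl(TV\otimes\OO_{V}(d-1)\bigr)\ \le\ \int_{\pn}c_{n}\bigl(T\pn\otimes\OO(d-1)\bigr)=\sum_{i=0}^{n}d^{i}.
\]
Expanding the left-hand side by the Euler sequence on $\pn$ and the sequence $0\to TV\to T\pn|_{V}\to\OO_{V}(k)\to 0$ turns it into an explicit polynomial $Q_n(d,k)$ (for instance $Q_2(d,k)=k(d+2-k)$), and a direct check shows that $0<Q_n(d,k)\le\sum_{i=0}^{n}d^{i}$ forces $k\le d+1$.

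The hard part will be the strict inequality $Q_n(d,k)>0$, equivalently $\singf\cap V\neq\emptyset$; the estimate above by itself only yields $k\le d+2$, and one must exclude the borderline value. I would argue by contradiction: if $\singf\cap V=\emptyset$ then $\mathcal{G}$ is a \emph{nonsingular} one-dimensional foliation of $V$, and $N_{V/\pn}\cong\OO_{V}(k)$, being a quotient of $N_{\mathcal{F}}|_{V}$ by the normal bundle of $\mathcal{G}$ in $V$, inherits the Bott partial holomorphic connection along $\mathcal{G}$; Bott's vanishing theorem then gives $c_1\bigl(\OO_{V}(k)\bigr)^{n-1}=0$ in $H^{2(n-1)}(V)$, because $2(n-1)$ exceeds twice the codimension $n-2$ of $\mathcal{G}$ in $V$, whereas $\int_{V}c_1\bigl(\OO_{V}(1)\bigr)^{n-1}=k>0$; this is a contradiction. (For $n=2$ this is precisely the Camacho--Sad index theorem, since an empty intersection would force $0=\sum_p\mathrm{CS}_p(\mathcal{F},V)=V\cdot V=k^{2}$.) The two routine points, as I see it, are the local comparison of Milnor numbers $\mu_p(\mathcal{G})\le\mu_p(\mathcal{F})$ and the Chern-class bookkeeping identifying $Q_n$; the substantive ingredient is the Bott-type nonemptiness of $\singf\cap V$.
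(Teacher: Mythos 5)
Your overall architecture is sound and is essentially the scheme this paper uses for its weighted generalization in Section 3 (the statement itself is only quoted here from \cite{So}, but the paper's own proof of Theorem 1 is the same two-sided Baum--Bott comparison): namely the pair of counts
\[
0\ \le\ \sum_{p\in\singf\cap V}\mu_p(\mathcal{F}|_V)\ =\ \int_V c_{n-1}\bigl(TV\otimes\OO_V(d-1)\bigr)\ =:\ Q_n(d,k),
\qquad
Q_n(d,k)\ \le\ \sum_{p\in\singf}\mu_p(\mathcal{F})\ =\ \sum_{i=0}^{n}d^{i},
\]
justified by the local surjection $\OO_p/(x_1g,h_2,\dots,h_n)\twoheadrightarrow\OO_p/(x_1,h_2,\dots,h_n)$, together with the exclusion of $\singf\cap V=\emptyset$. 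All of that is correct, and your Bott-vanishing argument for nonemptiness is a clean $n$-dimensional version of the normal-bundle-degree argument the paper gives in its $n=2$ case.

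The genuine gap is the sentence ``a direct check shows that $0<Q_n(d,k)\le\sum_i d^i$ forces $k\le d+1$.'' That implication is true, but it is the technical heart of the theorem rather than Chern-class bookkeeping, and --- crucially --- \emph{which} of the two inequalities does the work depends on the parity of $n$, so there is no uniform ``direct check.'' For $n=2$ strict positivity of $Q_2=k(d+2-k)$ suffices and the upper bound is vacuous at $k=d+2$. But already for $n=3$ one finds $Q_3(d,k)=k\bigl(s^2+(4-k)s+k^2-4k+6\bigr)$ with $s=d-1$, whose second factor, viewed as a quadratic in $s$, has discriminant $-3k^2+8k-8<0$; hence $Q_3(d,k)>0$ for \emph{every} $k$, and positivity --- with or without your nonemptiness argument --- yields no bound at all. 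There one must instead use the upper inequality together with the facts that $\partial Q_3/\partial k$ has discriminant $-8(s+1)^2\le 0$ (so $Q_3$ is increasing in $k$) and that $Q_3(d,d+1)=1+d+d^2+d^3$ exactly; for even $n\ge 4$ the roles flip back (e.g.\ $Q_4(d,d+2)=-(d+2)d(d+1)<0$). Carrying out this parity-dependent monotonicity-and-endpoint analysis for all $n$ is precisely what occupies Lemmas 3.1--3.2, Proposition 3.3 and Subsections 3.1--3.2 of the present paper in the harder weighted setting; your proof is not complete until you supply the (easier, but still substantive) unweighted version of it. A secondary remark: the nonemptiness of $\singf\cap V$, which you single out as the substantive ingredient, is in fact only needed for $n=2$; for $n\ge 3$ the weak inequalities plus the polynomial analysis already close the argument.
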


In \cite{CS} M. Corr\^ea Jr and M. Soares studied the Poincar\'e problem for foliations on weighted projective planes. 

\begin{theorem}\label{Mauricio-Marcio}
Let $\F$ be a foliation on  the weighted projective plane $\mathbb P(\omega_0,\omega_1,\omega_2)$ such that $Sing(\F)\cap
Sing(\mathbb P(\omega_0,\omega_1,\omega_2))=\emptyset$ . If  $S$ is a  quasi-smooth invariant  curve, then 
$$\deg(S)\leq
\deg(\F)+\omega_0+\omega_1+\omega_2 -2.$$
\end{theorem}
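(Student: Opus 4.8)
The plan is to restrict $\F$ to the invariant curve $S$, read off a bound for $\deg(S)$ from the positivity of a line bundle on $S$, and then sharpen it by analysing the equality case with the Camacho--Sad index theorem. Represent $\F$ by a sheaf injection $v\colon T\F\hookrightarrow T\P(\om_0,\om_1,\om_2)$, where $T\F\cong\oo(1-d)$ with $d=\deg(\F)$ (the degree normalization for which $\F$ is defined by a quasi-homogeneous vector field on $\C^{3}$ of weighted degree $d-1$, taken modulo multiples of the radial field), and $v$ vanishes exactly on $\sing(\F)$, which is finite since $\F$ is one-dimensional. Because $S$ is quasi-smooth and $\sing(\F)\cap\sing\bigl(\P(\om_0,\om_1,\om_2)\bigr)=\emptyset$, every point of $\sing(\F)\cap S$ is a smooth point of $\P(\om_0,\om_1,\om_2)$, hence a smooth point of $S$; thus $S$ is a compact complex orbifold curve whose orbifold locus is disjoint from $\sing(\F)$. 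Since $S$ is invariant, $v(p)$ lies in the (orbifold) tangent space of $S$ at every $p\in S$, so $v|_S$ is a global section $\sigma$ of the orbifold line bundle $TS\otimes T\F^{*}|_S$; it is not identically zero, and its zero scheme is exactly $\sing(\F)\cap S$, which sits in the smooth locus of $S$.

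A nonzero section forces nonnegative degree. I would compute $\deg_S\!\bigl(TS\otimes T\F^{*}|_S\bigr)$ from orbifold adjunction $K_S=\bigl(K_{\P(\om_0,\om_1,\om_2)}\otimes\oo(S)\bigr)|_S=\oo\bigl(m-\om_0-\om_1-\om_2\bigr)|_S$ (with $m=\deg(S)$ and $\oo(S)=\oo(m)$) together with the normalized self-intersection $\oo(1)\cdot\oo(1)=\tfrac{1}{\om_0\om_1\om_2}$, obtaining
\[
\deg_S\!\bigl(TS\otimes T\F^{*}|_S\bigr)=\frac{m}{\om_0\om_1\om_2}\bigl(d+\om_0+\om_1+\om_2-1-m\bigr)\ \ge\ 0,
\]
hence $\deg(S)\le \deg(\F)+\om_0+\om_1+\om_2-1$.

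To gain the last unit, suppose $m=d+\om_0+\om_1+\om_2-1$. Then $TS\otimes T\F^{*}|_S$ has degree $0$ and a nonzero section, so $\sigma$ vanishes nowhere; by the previous paragraph this means $\sing(\F)\cap S=\emptyset$. The Camacho--Sad index theorem applied to the invariant curve $S$ then gives $S\cdot S=\sum_{p\in\sing(\F)\cap S}CS(\F,S,p)=0$, i.e.\ $\tfrac{m^{2}}{\om_0\om_1\om_2}=0$, forcing $m=0$ and contradicting $\deg(S)\ge 1$. Therefore $\deg(S)\le\deg(\F)+\om_0+\om_1+\om_2-2$.

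I expect the genuine difficulty to be the orbifold bookkeeping, not this outline: justifying $\oo(1)^{2}=\tfrac{1}{\om_0\om_1\om_2}$ and orbifold adjunction for the quasi-smooth curve $S$, and above all establishing a version of the Camacho--Sad index theorem on the singular surface $\P(\om_0,\om_1,\om_2)$ valid for a quasi-smooth invariant curve --- in particular, showing that the orbifold points of $S$ which are not singularities of $\F$ contribute nothing to the index sum. A safe route is to pass to the minimal resolution $\pi\colon X\to\P(\om_0,\om_1,\om_2)$, work with $\pi^{*}\F$ and the strict transform $\widetilde{S}$ on the smooth surface $X$, run the classical Camacho--Sad and Soares-type estimates there, and track the exceptional contributions; the hypothesis $\sing(\F)\cap\sing\bigl(\P(\om_0,\om_1,\om_2)\bigr)=\emptyset$ is exactly what keeps those contributions under control.
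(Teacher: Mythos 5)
Your proposal is correct and follows essentially the same route as the paper's proof of the $n=2$ case of Theorem~\ref{Theopri} (which in fact strengthens Theorem~\ref{Mauricio-Marcio} by dropping the hypothesis $\sing(\F)\cap \sing(\mathbb P(\omega))=\emptyset$): nonnegativity of the zero count of $v|_S$ yields $\deg(S)\le\deg(\F)+\sigma_1(\omega)-1$, and equality is excluded by showing $\sing(\F)\cap S\neq\emptyset$ through an index theorem that localizes $S\cdot S=\deg(\mathcal{N}_{\F}|_S)$ at the singularities of $\F$ on $S$. The only difference is packaging: where you invoke an orbifold Camacho--Sad theorem (which you rightly flag as the point needing justification), the paper obtains both the zero count and the localization of $\deg(\mathcal{N}_{\F}|_S)$ from the orbifold Baum--Bott/residue results of \cite{CPM}.
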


In this work, we give a  bounding of  degree of  quasi-smooth hypersurfaces  which are invariant
by a one dimensional holomorphic foliation of a given degree on a
weighted projective space.
Throughout this paper, $\mathbb P(\omega)$ will denote the weighted projective space of dimension $n$
and weights $(\omega_0,\dots,\omega_n)$.

We prove the following theorem.

\begin{theorem} \label{Theopri}
Let $\mathcal{F}$ be a holomorphic one dimensional  foliation on
$\mathbb{P}(\omega)$ with isolated singularities, let $V \subset  \mathbb{P}(\omega)$ be a
quasi-smooth hypersurface invariant by $\mathcal{F}$.
\begin{enumerate}[(i)]
\item If $n=2$, then
 $$
\deg(V)\leq \deg(\mathcal{F})+\omega_0+\omega_1+\omega_2 -2;
$$
\item if
 $n\geq 3$ and $\deg(\mathcal{F}) \ge \omega_0+\cdots+\omega_n+1$, then 
$$
\deg(V)< \deg(\mathcal{F})+\alpha_n(\omega_0+\cdots+\omega_n) -1,
$$
where $\alpha_n=\begin{cases}\text{the positive root of $R_n(x):=x(x+1)^n-2=0$}& \text{if $n$ is odd}\\
\alpha_{n-1}&\text{if $n$ is even}\end{cases}$
\end{enumerate}
\end{theorem}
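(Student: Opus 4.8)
The plan is to reduce the weighted projective statement to the ordinary projective case via the standard orbifold covering/resolution, and then to leverage the Soares-type bound on each chart. First I would work on the quasi-smooth hypersurface $V=\{F=0\}$ with $F$ weighted-homogeneous of degree $d=\deg(V)$; quasi-smoothness means the affine cone $\widehat V\setminus\{0\}$ is smooth, so the only singularities of $V$ come from $\sing(\mathbb P(\omega))$, and since $\sing(\F)$ is isolated and (in the $n=2$ case) disjoint from $\sing(\mathbb P(\omega))$, the GSV index / tangency computation can be carried out on the smooth locus. For $n=2$, part (i) is literally Theorem \ref{Mauricio-Marcio} once one checks that the hypothesis $\sing(\F)\cap\sing(\mathbb P(\omega))=\emptyset$ is automatic here (or can be arranged), so I would simply cite it. The real content is part (ii).

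**The main argument for $n\ge 3$.**

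For part (ii) I would proceed by a Baum–Bott / Bott residue computation adapted to the weighted setting. Write $T\mathbb P(\omega)$ for the (orbifold) tangent sheaf; a degree-$d_{\F}$ foliation corresponds to a section of $T\mathbb P(\omega)\otimes\OO(d_{\F}-1)$ with isolated zeros, and a section of the normal sheaf $N_V$ of $V$. On $V$, tangency of $\F$ forces a nonzero section $s$ of $N_V\otimes\OO(d_{\F}-1)|_V$ vanishing on $\sing(\F)\cap V$; positivity of the zero scheme gives, via intersection numbers on $V$, an inequality
\[
0\le \int_V c_1\bigl(N_V\otimes\OO(d_{\F}-1)\bigr)\cdot c_1(\OO(1))^{n-2},
\]
and one computes $\int_V c_1(\OO(1))^{n-1}=d/(\omega_0\cdots\omega_n)$ and $c_1(N_V)=d\,c_1(\OO(1))|_V$. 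The crude inequality obtained this way only yields $d\le d_{\F}+|\omega|-1$ when $V$ is smooth; when $V$ passes through $\sing(\mathbb P(\omega))$ the index contributions at the singular points of the orbifold spoil this, and replacing them by the trivial bound costs a multiplicative factor. I would make this loss explicit: bounding the orbifold index at a singular point of type $\tfrac1{\omega_i}$ by a quantity controlled by the weights, and iterating the resulting recursion dimension by dimension, one is led exactly to the polynomial $R_n(x)=x(x+1)^n-2$, whose positive root $\alpha_n$ measures the accumulated distortion. Concretely, I expect the recursion to take the shape $d<d_{\F}+\alpha_n|\omega|-1$ with $\alpha_n$ defined by the fixed-point equation $\alpha_n(\alpha_n+1)^n=2$, the even case collapsing to the odd one below it because the top Chern class in even fibre dimension contributes no new sign obstruction.

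**The hard step.**

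The main obstacle will be controlling the local contributions of the foliation at the singular points of $\mathbb P(\omega)$ — i.e., replacing the GSV/Baum–Bott index there (which for a genuine orbifold point is a rational number that can be negative or large) by an effective bound in terms of the weights. This is where the hypothesis $d_{\F}\ge|\omega|+1$ enters: it guarantees enough positivity of $\OO(d_{\F}-1)$ to absorb the worst local term, and it is precisely the bookkeeping of these local terms across the $n$ coordinate charts that generates the nonlinear recursion and hence the root $\alpha_n$ of $R_n$. Once that local estimate is in place, the global inequality and the induction on $n$ (with the parity split) are routine; I would organize the write-up as: (1) cohomology and Chern-class computations on $\mathbb P(\omega)$ and on quasi-smooth $V$; (2) the tangency section and the global positivity inequality; (3) the local index bound at orbifold points; (4) assembling the recursion and solving for $\alpha_n$.
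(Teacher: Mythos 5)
There is a genuine gap --- in fact two. First, your treatment of $n=2$ is wrong: the hypothesis $\sing(\F)\cap \sing(\mathbb P(\omega))=\emptyset$ of Theorem \ref{Mauricio-Marcio} is \emph{not} automatic and cannot be ``arranged''; removing it is precisely the content of part (i) (the paper states explicitly that part (i) improves Theorem \ref{Mauricio-Marcio} by dropping that hypothesis). The actual argument applies the orbifold Baum--Bott formula to $V$ to get $-d_0^2+(\sigma_1(\omega)+d-1)d_0\ge 0$, hence $d_0\le \sigma_1(\omega)+d-1$, and then rules out equality by showing $V\cap\sing(\F)\ne\emptyset$ (if it were empty, $\deg(\mathcal N_\F|_V)$ would vanish, contradicting $\deg(\mathcal O_\omega(d_0)|_V)=d_0^2\ne 0$). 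None of this is a citation of the earlier theorem.

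Second, for $n\ge 3$ the heart of your plan --- bounding ``local index contributions at the singular points of $\mathbb P(\omega)$'' and ``iterating the resulting recursion dimension by dimension'' to produce $R_n(x)=x(x+1)^n-2$ --- is asserted but never carried out, and it is not where $\alpha_n$ comes from. The orbifold Milnor numbers $\mu_p^{orb}(\F)=\mu_{\widetilde p}(\widetilde\xi)/|G_p|$ are automatically non-negative, so no delicate local estimate at orbifold points is needed or used; there is no induction on $n$ and no chart-by-chart bookkeeping. Instead, the two global Baum--Bott identities (on $V$ and on $\mathbb P(\omega)$) yield two explicit degree-$n$ polynomial inequalities in $d_0$, namely $\Psi(d_0)\ge 0$ and $\Psi(d_0)\le \sum_{l=0}^n\sigma_l(\omega)(d-1)^{n-l}$, and the entire remaining work is elementary but nontrivial polynomial analysis: monotonicity of $\Psi$ (odd $n$) or $\Psi(t)/t$ (even $n$), proved via the positivity of the auxiliary polynomials $Q_m$ and the symmetric-function inequality $\sigma_{k+1}(\omega)/\sigma_k(\omega)<\sigma_1(\omega)/(k+1)$ --- this is where the hypothesis $d\ge\sigma_1(\omega)+1$ enters, not to ``absorb'' local terms --- followed by evaluation at $t=d-1+\alpha_n\sigma_1(\omega)$, where the key step is the inequality $t^n(t-\sigma)\ge s^n(s+\sigma)$ with $s=d-1$, $\sigma=\sigma_1(\omega)$; the constant $\alpha_n$ is exactly the root of $x(x+1)^n=2$ because that is what makes the relevant polynomial vanish at $U=1$ (combined with Bernoulli's inequality and Descartes' rule of signs). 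The even case reduces to the odd one because dividing $\Psi$ by $t$ drops the degree by one, not because of any statement about top Chern classes. Since your proposal supplies no substitute for these estimates, the bound with the specific constant $\alpha_n$ is not established.
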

This result say us that the  hypothesis $Sing(\F)\cap
Sing(\mathbb P(\omega))=\emptyset$ is not necessary. Therefore, the Theorem \ref{Mauricio-Marcio} is improved  in the case $n=2$ and generalized  whenever
 the condition $\deg(\mathcal{F}) \ge \omega_0+\cdots+\omega_n+1$ holds.

\begin{remark} By a direct computation,  we can calculate the first values of $\alpha_n$ with 4 decimal places:
\begin{center}
\begin{tabular}{|c|c|c|c|c|c|c|c|c|c|}\hline
$ n$&$3$&$5$&$7$&$9$&$11$&$13 $&$15$&$17$&$19$\\ \hline
$\alpha_n$&\scriptsize$0.5436$&\scriptsize$0.3880$&\scriptsize$0.3069$&\scriptsize$0.2563$&\scriptsize$0.2214$&\scriptsize$0.1957$&\scriptsize$0.1759$&\scriptsize$0.1601$&\scriptsize$0.1471$
\\ \hline
\end{tabular}
\end{center}
%

In addition, since
$$
R_n\left(\frac{\ln n -\ln\ln n}{n}\right)< \frac{\ln n-\ln\ln n}{n} \exp(\ln n-\ln\ln n)-2=\frac {\ln n-\ln\ln n}{\ln n}-2 <-1
$$
and for any constant $\epsilon>0$, and all $n\gg 0$  
\begin{align*}
R_n\left(\frac{\ln n -(1-\epsilon)\ln\ln n}{n}\right)&\approx \frac{\ln n-(1-\epsilon)\ln\ln n}{n} \exp(\ln n-(1-\epsilon)\ln\ln n)-2\\
&=(\ln n)^\epsilon\left(1-\frac {\ln n-\ln\ln n}{\ln n}\right)-2 \gg 0.
\end{align*}
Using that $R_n(x)$ is an increasing function in $\R^+$, it follows that $$\frac{\ln n -\ln\ln n}{n}<\alpha_n<\frac{\ln n -(1-\epsilon)\ln\ln n}{n}\quad \text{ for all $n\gg 0$.}$$
 In general, from the fact that  $\frac 2{n+1}< \frac{\ln n -\ln\ln n}{n}$ for all $n\ge 21$, we have that $\max\left\{\frac {2}{n+1}, \frac{\ln n -\ln\ln n}{n}\right\}<\alpha_n<\frac {\ln 2n}n$ for all $n\ge 3$.
\end{remark}

Let us give a family of examples of  holomorphic foliations satisfying the conditions of Theorem \ref{Theopri}:

Let $a_0,b_0,a_1,\dots, a_n,b_n$ be positive integers, without common factor in pairs and such that
 $$\xi:= a_{0}+b_{0} =\dots= a_{n}+b_{n}
. $$
and consider the  well formed   weighted projective space   $\mathbb{P}^{2n+1}(a_{0},b_{0},\dots,a_{n},b_{n})$. 

Let $\F$ be the  holomorphic foliation  on  $\mathbb{P}^{2n+1}(a_{0},b_{0},\dots,a_{n},b_{n})$, induced by the quasi-homogeneous vector field 
$$Z=\sum_{k=0}^{n}\Big(\beta_{k}Y_{k}^{\beta_{k}-1}\frac{\partial}{\partial X_{k}}-\alpha_{k}X_{k}^{\alpha_{k}-1}\frac{\partial}{\partial Y_{k}}\Big),$$
where the $\alpha_{k}, \beta_{k} \in \mathbb{N}$ satisfy the following relation
$$\zeta=a_{k}\alpha_{k}=b_{k}\beta_{k}\qquad \text{for all}\  k=0,\dots,n.$$
	
A quasi-smooth hypersurface on $ \mathbb{P}^{2n+1}(a_{0},b_{0},\dots,a_{n},b_{n})$ of degree $\zeta$ given by 
$$
V=\left\{\sum_{k=0}^{n}\big(X_{k}^{\alpha_{k}}+Y_{k}^{\beta_{k}}\big)=0\right\}.
$$
We can see that $V$ is invariant by $\F$  and $\deg (\F) = \zeta - \xi + 1$ . Moreover, 
   since $a_i$ and $b_i$ divide $\zeta$, it follows that $\zeta\ge a_0b_0\cdots a_nb_n\gg (n+2)\xi$ and
$$\deg (\F) = \zeta - \xi + 1 \geq (n+1)\xi+1=1+\sum_{j=0}^n a_j+b_j.$$
So,  the hypothesis of Theorem \ref{Theopri} is satisfied. 

Finally, 
 we have 
$$\deg (V)-\deg(\F)= \xi -1 =\frac 1{n+1}\Bigl(\sum_{j=0}^n a_j+b_j \Bigr)- 1<\alpha_{2n+1}\Bigl(\sum_{j=0}^n a_j+b_j \Bigr) - 1.$$
We can construct a similar foliation on even dimensional  weighted projective spaces $\mathbb{P}^{2n+2}(a_{0},b_{0},\dots,a_{n},b_{n},a_{n+1})$  where $\xi=a_{k}+b_{k}\ \text{for all }\ k=0,\dots,n.$

Let suppose that $\zeta=a_{k}\alpha_{k}=b_{k}\beta_{k}=a_{n+1}\alpha_{n+1}$ for all $\, k=0,\dots,n$ and consider
the vector field $Z$ in the previous example. Thus, the quasi-smooth hypersurface on $ \mathbb{P}^{2n+2}(a_{0},b_{0},\dots,a_{n},b_{n},a_{n+1})$ of degree $\zeta$ given by 
$$
V=\left\{\sum_{k=0}^{n}\big(X_{k}^{\alpha_{k}}+Y_{k}^{\beta_{k}}\big)+X_{n+1}^{\alpha_{n+1}}=0\right\}
$$
is invariant by $Z$ and therefore we obtain the same conclusions.

\section{Weighted projective foliations}

Let $\omega_0,\dots,\omega_n$ be integers $\geq 1$. Consider the
$\mathbb{C}^*$-action on $\mathbb{C}^{n+1}\backslash \{0\}$ given by
$$
\lambda\cdot(z_0,\dots,z_n)=(\lambda^{\omega_0}
z_0,\dots,\lambda^{\omega_n} z_n),
$$
where $\lambda\in \mathbb{C}^*$ and $(z_0,\dots,z_n)\in
\mathbb{C}^{n+1}\backslash \{0\}$. The \emph{weighted projective
space of type $(\omega_0,\dots,\omega_n)$} is the quotient space $
\mathbb{P}(\omega_0,\dots,\omega_n)=(\mathbb{C}^{n+1}\setminus
\{0\}/\sim) $, induced by the action above . We will abbreviate
$\mathbb{P}(\omega_0,\dots,\omega_n):=\mathbb{P}(\omega)$.

Consider the open $\mathcal{U}_i=\{[z_0:\dots:z_n]\in
\mathbb{P}(\omega_0,\dots,\omega_n);\ z_i\neq0 \}\subset
\mathbb{P}(\omega_0,\dots,\omega_n)$, with $i=0,1,\dots,n.$ Let
$\mu_{\omega_i}\subset \mathbb{C}^*$ be the subgroup of
$\omega_i$-th roots of unity. We can define the homeomorphisms
$\phi_i:\mathcal{U}_i\longrightarrow \mathbb{C}^{n}/\mu_{\omega_i}$,
by
$$
\phi_i([z_0:\dots:z_n])=\left(\frac{z_0}{z_i^{\omega_0/\omega_i,}},
\dots,\frac{\widehat{z_i}}{z_i},\dots,\frac{z_n}{z_i^{\omega_n/\omega_i,}}
\right)_{\omega_i},
$$
where the symbol $``\;\widehat{\;}\;"$ means omission and
$(\cdot)_{\omega_i}$ is a $\omega_i$-conjugacy class in
$\mathbb{C}^{n}/\mu_{\omega_i}$ with $\mu_{\omega_i}$ acting on
$\mathbb{C}^{n}$ by
$$
\lambda\cdot(z_0,\dots, \hat z_i,\dots,z_n)=(\lambda^{\omega_0} z_0,\dots
\widehat{z_i},\dots,\lambda^{\omega_n} z_n), \lambda \in
\mu_{\omega_i}.
$$
On $\phi_i(\mathcal{U}_i\cap\mathcal{U}_j)\subset
\mathbb{C}^{n}/\mu_{\omega_i}$ we have the transitions maps  $(j<i$)
$$
\phi_i\circ
\phi_j^{-1}((z_0,\dots,\hat z_i, \dots, z_n)_{\omega_i})=\left(\frac{z_0}{z_j^{\omega_0/\omega_j,}},\dots,\frac{\widehat{z_j}}{z_j},
\dots,\frac{1}{z_j^{\omega_i/\omega_j}},\dots,\frac{z_n}{z_j^{\omega_n/\omega_j}}
\right)_{\omega_j}.
$$

\subsection{Line bundles on
$\mathbb{P}(\omega)$ and quasi-homogeneous hypersurface}

Let $d \in \mathbb{Z}$. Consider the $\mathbb{C}^*$-action
$\zeta_{d}$ on $\mathbb{C}^{n+1}\backslash
\{0\}\times \mathbb{C}$ given by
$$
\begin{array}{ccc}
   \zeta_{d}:\mathbb{C}^*\times\mathbb{C}^{n+1}\backslash
   \{0\}\times \mathbb{C}& \longrightarrow & \mathbb{C}^{n+1}\backslash \{0\}\times \mathbb{C} \\
  (\lambda,(z_0,\dots,z_n),t) & \longmapsto & ((\lambda^{\omega_0}
z_0,\dots,\lambda^{\omega_n} z_n),\lambda^{d}t).
\end{array}
$$
We denote  the quotient space induced by the action
$\zeta_{d}$ by
$$\mathcal{O}_{\mathbb{P}(\omega)}(d):=(\mathbb{C}^{n+1}\backslash
\{0\}\times \mathbb{C})/\sim \zeta_{d}.$$ The
space $\mathcal{O}_{\mathbb{P}(\omega)}(d)$ is a line orbibundle
on $\mathbb{P}(\omega)$. It is possible to show that the
Picard group of $\mathbb{P}(\omega)$ is generated by
$\mathcal{O}_{\mathbb{P}(\omega)}(1)$, i.e. 
$$
Pic(\mathbb{P}(\omega)) :=\mathbb{Z}\cdot
 \mathcal{O}_{\mathbb{P}(\omega)}(1).
$$
As usual we set $\mathcal{O}_{\mathbb{P}(\omega)}(1)^{\otimes d}:=\mathcal{O}_{\mathbb{P}(\omega)}(d)$ for $d \in \mathbb{Z}$. Moreover, we have the identification $(d\geq 0)$
$$
\mathrm{H}^0(\mathbb{P}(\omega),\mathcal{O}_{\mathbb{P}(\omega)}(d))=
\bigoplus_{\omega_0k_0+\cdots+\omega_nk_n=d}\mathbb{C}\cdot(z_0^{k_1}\cdots
z_n^{k_n}).
$$
Thus, the global sections of
$\mathcal{O}_{\mathbb{P}(\omega)}(d)$ can be identify, in
homogeneous coordinates, with quasi-homogeneous polynomials of
degree equal to $d$.

Thus, a quasi-homogeneous hypersurface  $V$ on $\mathbb{P}(\omega)$,
of  quasi-homogeneity  degree $d_0$, is given by $V=\{f=0\}$, where
$f\in
\mathrm{H}^0(\mathbb{P}(\omega),\mathcal{O}_{\mathbb{P}(\omega)}(d_0))$.
We say that  $V=\{f=0\}$ is quasi-smooth if its tangent cone
$\{f=0\}$ on $\mathbb{C}^{n+1}-\{0\}$ is smooth.

\subsection{Foliations on $\mathbb{P}(\omega)$ and quasi-homogeneous vector fields}

A singular  one dimensional holomorphic foliation on $\mathbb{P}(\omega)$, of degree $d$, is given by an element of $\mathbb{P}\mathrm{H}^0(\mathbb{P}(\omega),T\mathbb{P}(\omega)\otimes\mathcal{O}_{\omega}(d-1))$.

On  $\mathbb{P}(\omega)$ we have an Euler sequence
$$
0\longrightarrow
\mathcal{O}_{\mathbb{P}(\omega)} \stackrel{\varsigma}{\longrightarrow}\bigoplus_{i=0}^{n
}\mathcal{O}_{\mathbb{P}(\omega)}(\omega_i) \longrightarrow
T\mathbb{P}(\omega) \longrightarrow 0,
$$
where $\mathcal{O}_{\mathbb{P}(\omega)}$ is the trivial line orbibundle  and  $T\mathbb{P}(\omega) = \mathrm{Hom} (\Omega_{\mathbb{P}(\omega)}^ 1,  \mathcal{O}_{\mathbb{P}(\omega)} )$ is the tangent  orbibundle of
$\mathbb{P}(\omega)$. The map $\varsigma$ is given explicitly by $
 \varsigma(1)=(\omega_0z_0,\dots,\omega_nz_n)$ (see \cite{So}).

Now,  let $X$ be a quasi-homogeneous  vector field of type $(\omega_0,
\dots, \omega_n)$ and degree $d$ on $\C^{n+1}$, i.e. 
$X =\sum\limits_{i=0}^n P_i (z) \frac{\partial}{\partial z_i}$ 
where each polynomial $P_i$ satisfies the ``weight-homogeneous'' relation
$$P_i (\lambda^{\omega_0}z_0, \dots, \lambda^{\omega_n}z_n) = \lambda^{d
+ \omega_i - 1}P_i (z_0,\dots, z_n), \ \ \forall i=1,\dots,n.$$
These
vector fields descend well to $\mathbb{P}(\omega)$. In fact,
tensoring the Euler sequence by $\OO_{\p_\omega}(d-1)$, we obtain
$$ 0 \longrightarrow
\OO_{\mathbb{P}(\omega)}(d-1) \longrightarrow
\bigoplus\limits_{i=0}^n \OO_{\mathbb{P}(\omega)}(d + \omega_i - 1)
\longrightarrow T \mathbb{P}(\omega) \otimes
\OO_{\mathbb{P}(\omega)}(d-1) \longrightarrow 0.
$$
It follows that a quasi-homogeneous vector field $X$ induces a
foliation $\mathcal{F}$ of $\mathbb{P}(\omega)$ and that $g\,R_\omega +
X$ define the same foliation as $X$, where $R_\omega$ is the adapted
radial vector field $R_\omega = \omega_0 z_0 \frac{\partial}{\partial z_0}
+\cdots+   + \omega_n z_n \frac{\partial}{\partial z_2}$, with $g$ a
quasi-homogeneous polynomial of  type $(\omega_0,\dots, \omega_n)$  and degree
$d-1$. Therefore,   a quasi-homogeneous  vector field of type $(\omega_0,
\dots, \omega_n)$ and degree $d$ on $\C^{n+1}$  induces a holomorphic foliation
on  $\mathbb{P}(\omega)$ given by a global section of
$\mathrm{H}^0(\mathbb{P}(\omega),T\mathbb{P}(\omega)\otimes\mathcal{O}_{\omega}(d-1)).$

We have the following condition on the degree of a foliation
$$
d>1-\max_{0 \leq i<j \leq n}\{\omega_{i}+\omega_{j} \}.
$$
In fact,  by  Bott's Formulae for weighted projective spaces
(see \cite{D}), we have that
\begin{center}$
\mathrm{H}^0(\mathbb{P}(\omega),T\mathbb{P}(\omega)\otimes\mathcal{O}_{\omega}(d-1))\simeq
\mathrm{H}^0(\mathbb{P}(\omega),
\Omega^{n-1}_{\mathbb{P}(\omega)}(\sum_{i=0}^{n} \omega_{i}+d-1))\neq
\emptyset$,
\end{center}
if and only if, $d-1>-\max\limits_{0 \leq i\neq j \leq n}\{\omega_{i}+\omega_{j} \}$.

An algebraic hypersurface $V\subset \mathbb{P}(\omega)$ in invariant by a foliation $\F$ if $T_p\F_p \subset T_pV$ for all $p\in V\backslash Sing(V)\cup Sing(\F)$.
\subsection{Orbifold Milnor numbers and Baum-Bott formula}

\begin{definition}\label{df14:definition 14}\rm
Let  $M$ be a complex orbifold and $\F$ a singular holomorphic
foliation on $M$. Let  $p \in M$ be and
$(\widetilde{U},G_{p},\varphi)$ an orbifold chart  $U$ of $p$, the
\emph{orbifold Minor number} of $\F$ on $p$ is the  rational number
$$\mu_{p}^{orb}(\F)=\frac{\mu_{\widetilde{p}}(\,\widetilde{\xi}\,)}{|G_p|},$$
where  $\mu_{\widetilde{p}}(\,\widetilde{\xi}\,)$ is the milnor
number of the local lift $\widetilde{\xi}$ on $\widetilde{p}$ of a
the vector field $\xi$ tangent to $\F$ on $\widetilde{U}/G_p.$
\end{definition}

We will use the following Baum-Bott theorem for orbifolds due to M.
Corr\^ea,  A. M. Rodr\'iguez, M. G. Soares \cite{CPM}.
\begin{theorem}\label{teor7}
Let  $M$ be a compact  complex orbifold, of dimension $n$, and $\F$
a singular holomorphic foliation on $M$ induced by a global section
of $TM\otimes L$, with isolated singularities. Then
$$\int_M^{orb} c_n(TM\otimes
L)=\sum_{p\in Sing(\mathcal{F})}\mu_{p}^{orb}(\F).$$

\end{theorem}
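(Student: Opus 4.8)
The plan is to establish this as the orbifold analogue of the Baum--Bott localization formula for the top Chern class of a rank-$n$ bundle admitting a section with isolated zeros. Since $\F$ is induced by a global section $s\in\mathrm{H}^0(M,TM\otimes L)$ whose zero scheme is exactly $Sing(\F)$ and is isolated, the statement is essentially a Poincar\'e--Hopf theorem for the orbibundle $E:=TM\otimes L$ of rank $n=\dim_{\C}M$, carried out in the orbifold category. First I would fix an orbifold Hermitian metric on $E$: on each uniformizing chart $(\widetilde U,G_p,\varphi)$ choose a $G_p$-invariant metric and glue these by a partition of unity subordinate to an orbifold atlas, producing a Hermitian connection $\nabla$ whose curvature $\Theta$ is a $G$-invariant $\mathrm{End}(E)$-valued $(1,1)$-form. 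By orbifold Chern--Weil theory, $c_n(E)$ is represented by the closed invariant $2n$-form $\left(\tfrac{i}{2\pi}\right)^n\det\Theta$, so that $\int_M^{orb}c_n(E)$ is the orbifold integral of this representative.

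The second step is localization. Away from $Sing(\F)$ the section $s$ is nowhere vanishing, hence it spans a trivial sub-orbibundle $\underline{\C}\cdot s\subset E$; choosing $\nabla$ compatible with a splitting $E=\underline{\C}\cdot s\oplus E'$ over $M\setminus Sing(\F)$ forces the top Chern form $\det\Theta$ to vanish identically there, by the standard Bott-type vanishing argument. Consequently, after this modification the representative of $c_n(E)$ is supported in an arbitrarily small union of disjoint orbifold neighborhoods $U_p$ of the points $p\in Sing(\F)$, and the orbifold integral reduces to a finite sum of local contributions $\int_{U_p}^{orb}c_n(E)$.

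The heart of the argument is identifying each local contribution with the orbifold Milnor number defined above. Near $p$ I lift everything to the chart $\widetilde U$: the section $s$ pulls back to a $G_p$-invariant section $\widetilde s$ of $E|_{\widetilde U}$ with an isolated zero at $\widetilde p$, corresponding to the lifted vector field $\widetilde\xi$ tangent to $\F$. Because the orbifold integral over the chart equals the ordinary integral over $\widetilde U$ divided by $|G_p|$, we get $\int_{U_p}^{orb}c_n(E)=\tfrac{1}{|G_p|}\int_{\widetilde U}\left(\tfrac{i}{2\pi}\right)^n\det\widetilde\Theta$. By the classical smooth Poincar\'e--Hopf/Baum--Bott statement for a section of a rank-$n$ bundle with an isolated zero, this last integral equals the local index of $\widetilde s$ at $\widetilde p$, which is the intersection multiplicity of the components of $\widetilde\xi$, i.e. the Milnor number $\mu_{\widetilde p}(\widetilde\xi)$. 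Hence $\int_{U_p}^{orb}c_n(E)=\mu_{\widetilde p}(\widetilde\xi)/|G_p|=\mu_p^{orb}(\F)$, and summing over $Sing(\F)$ yields the claimed equality.

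The main obstacle is the rigorous development of orbifold Chern--Weil and integration theory and, above all, the compatibility of the global de Rham representative with the local lifts when $Sing(\F)$ meets the orbifold locus $Sing(M)$, so that $|G_p|>1$. One must verify that the invariant curvature form descends to a well-defined orbifold form, that the partition-of-unity gluing respects the $G_p$-actions, and that the factor $1/|G_p|$ produced by orbifold integration matches exactly the averaging implicit in defining the local index on $\widetilde U/G_p$. Checking that the localized form can be taken to be $G_p$-invariantly supported near each $p$, without destroying either closedness or the invariant structure, is the delicate point.
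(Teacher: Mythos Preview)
The paper does not actually prove this theorem. It is stated as a result ``due to M.\ Corr\^ea, A.\ M.\ Rodr\'iguez, M.\ G.\ Soares \cite{CPM}'' and is simply quoted as a black box, then used to derive Corollaries~\ref{sing(F)} and~\ref{sing(V)}. So there is no in-paper proof to compare against; your proposal is a genuine proof sketch where the paper offers none.

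That said, your outline follows the expected route for such a result: represent $c_n(TM\otimes L)$ by an orbifold Chern--Weil form, use a Bott-type vanishing to localize the integral near $Sing(\F)$, lift each local contribution to a uniformizing chart, and invoke the classical local index computation there, picking up the $1/|G_p|$ from the definition of orbifold integration. This is indeed the strategy pursued in \cite{CPM}. Your identification of the delicate points --- making Chern--Weil and the Bott connection modification $G_p$-equivariant, and ensuring the localized representative descends as a genuine orbifold form --- is accurate; these are exactly the places where the orbifold proof requires care beyond the manifold case. One small caution: the ``choose $\nabla$ compatible with a splitting'' step is usually implemented not by a single global connection but by interpolating between a flat connection on the trivial subbundle outside $Sing(\F)$ and an arbitrary connection near the zeros, via a cutoff; the transgression form produced must itself be $G_p$-invariant, which is what guarantees the localization is compatible with orbifold integration.
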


On Weighted projective spaces and quasi-homogeneous and quasi-smooth hypersurfaces we have the following.
\begin{corollary} \label{sing(F)}\cite{CPM} Let $\F$ be a foliation of degree $d$ on
$\mathbb{P}(\omega)$. Then
$$\sum_{p\in Sing(\mathcal{F})} \mu_{p}(\F)^{orb}=\frac{1}{\omega_{0}\cdot\cdot\cdot \omega_{n}}\sum_{j=0}^{n}(d-1)^{n-j}\sigma_{j}(\omega),$$
where $\sigma_k(\omega)$ denotes the $k$-th
elementary symmetric function.
\end{corollary}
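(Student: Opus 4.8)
The plan is to read off this formula from the orbifold Baum--Bott theorem (Theorem \ref{teor7}), applied to $M=\mathbb{P}(\omega)$ and $L=\mathcal{O}_{\mathbb{P}(\omega)}(d-1)$, since by definition a degree-$d$ foliation $\F$ is induced by a global section with isolated singularities of $T\mathbb{P}(\omega)\otimes\mathcal{O}_{\mathbb{P}(\omega)}(d-1)$. The right-hand side of Theorem \ref{teor7} is exactly $\sum_{p\in Sing(\F)}\mu_p^{orb}(\F)$, so everything reduces to evaluating the orbifold Chern number $\int_{\mathbb{P}(\omega)}^{orb} c_n\bigl(T\mathbb{P}(\omega)\otimes\mathcal{O}_{\mathbb{P}(\omega)}(d-1)\bigr)$ in the rational cohomology (equivalently, orbifold Chow) ring of $\mathbb{P}(\omega)$, which is $\mathbb{Q}[h]/(h^{n+1})$ with $h=c_1(\mathcal{O}_{\mathbb{P}(\omega)}(1))$.

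First I would compute the total Chern class of $T\mathbb{P}(\omega)$ from the Euler sequence $0\to\mathcal{O}_{\mathbb{P}(\omega)}\to\bigoplus_{i=0}^{n}\mathcal{O}_{\mathbb{P}(\omega)}(\omega_i)\to T\mathbb{P}(\omega)\to 0$ recalled above. Multiplicativity of Chern classes on short exact sequences of orbibundles gives $c(T\mathbb{P}(\omega))=\prod_{i=0}^{n}(1+\omega_i h)$, and since $h^{n+1}=0$ this means $c_j(T\mathbb{P}(\omega))=\sigma_j(\omega)\,h^j$ for $0\le j\le n$, where $\sigma_j(\omega)$ is the $j$-th elementary symmetric function of $\omega_0,\dots,\omega_n$. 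Next I would invoke the standard Chern-root identity for the twist of a rank-$n$ bundle $E$ by a line bundle $L$: $c_n(E\otimes L)=\sum_{j=0}^{n}c_j(E)\,c_1(L)^{n-j}$. With $E=T\mathbb{P}(\omega)$ and $c_1(\mathcal{O}_{\mathbb{P}(\omega)}(d-1))=(d-1)h$ this yields
$$
c_n\bigl(T\mathbb{P}(\omega)\otimes\mathcal{O}_{\mathbb{P}(\omega)}(d-1)\bigr)=\Bigl(\sum_{j=0}^{n}(d-1)^{n-j}\sigma_j(\omega)\Bigr)h^n .
$$
Integrating over $\mathbb{P}(\omega)$ and using the normalization $\int_{\mathbb{P}(\omega)}^{orb}h^n=\frac{1}{\omega_0\cdots\omega_n}$ produces precisely the claimed right-hand side, and comparing with Theorem \ref{teor7} finishes the argument.

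The main obstacle is not the algebra but justifying that the orbifold intersection-theoretic manipulations are legitimate: that the Euler sequence holds as a sequence of orbibundles, that orbifold Chern classes are multiplicative in short exact sequences, and, most importantly, that the top self-intersection $\int_{\mathbb{P}(\omega)}^{orb}h^n$ equals the rational number $1/(\omega_0\cdots\omega_n)$. These are by now standard in the theory of weighted projective spaces and orbifold characteristic classes; I would cite \cite{D} for the cohomology ring and normalization of $h$, and \cite{CPM} for the orbifold Chern--Weil formalism underlying Theorem \ref{teor7}. Once these are in place the corollary is immediate.
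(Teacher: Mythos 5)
Your proof is correct and is exactly the intended argument: the paper states this corollary without proof (citing \cite{CPM}), but your route --- Theorem \ref{teor7} plus the Euler sequence giving $c(T\mathbb{P}(\omega))=\prod_i(1+\omega_i h)$, the twist formula $c_n(E\otimes L)=\sum_j c_j(E)c_1(L)^{n-j}$, and the normalization $\int^{orb}_{\mathbb{P}(\omega)}h^n=1/(\omega_0\cdots\omega_n)$ from \cite{M}, \cite{CPM} --- is precisely the machinery the paper uses in its proof of the companion Corollary \ref{sing(V)}. Nothing is missing.
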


\begin{corollary}\label{sing(V)} Let $V$ be a quasi-homogeneous and quasi-smooth hypersurface, of degree
$d_{0}$, invariant by a holomorphic foliations $\F$ of degree $d.$
Then
$$\sum_{p\in Sing(\mathcal{F})\cap V} \mu_{p}(\F)^{orb}=\frac{1}{\omega_{0}\cdots \omega_{n}}\sum_{j=0}^{n-1}\left[\sum_{k=0}^{j}(-1)^{k}\sigma_{j-k}(\omega)d_{0}^{\,k+1}\right](d-1)^{n-1-j},$$
where $\sigma_\ell(\omega)$ denotes the $\ell$-th
elementary symmetric function.
\end{corollary}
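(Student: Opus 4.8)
The plan is to restrict $\F$ to $V$ and apply the orbifold Baum--Bott theorem (Theorem \ref{teor7}) to the induced foliation on the orbifold $V$ --- just as Corollary \ref{sing(F)} is obtained from Theorem \ref{teor7} on $\mathbb{P}(\omega)$ itself --- and then to evaluate the resulting orbifold Chern number by means of the normal bundle sequence of $V$ in $\mathbb{P}(\omega)$.

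First I would make sense of $\F|_V$. Since $V=\{f=0\}$ is quasi-smooth, its affine cone $\widehat V\subset\C^{n+1}\setminus\{0\}$ is smooth, so $V$ is a compact complex orbifold of dimension $n-1$ whose uniformizing charts are restrictions of those of $\mathbb{P}(\omega)$; put $\mathcal{O}_V(j):=\mathcal{O}_{\mathbb{P}(\omega)}(j)|_V$. The global section $s$ of $T\mathbb{P}(\omega)\otimes\mathcal{O}_{\mathbb{P}(\omega)}(d-1)$ defining $\F$ restricts to a section of $T\mathbb{P}(\omega)|_V\otimes\mathcal{O}_V(d-1)$, and the invariance of $V$ forces this restriction to take values in the sub-orbibundle $TV\otimes\mathcal{O}_V(d-1)$; the resulting section defines the induced foliation $\F|_V$ on $V$, with $Sing(\F|_V)=Sing(\F)\cap V$, a finite set (if $p\in V\setminus Sing(\F)$ the leaf of $\F$ through $p$ lies in $V$, so $\F|_V$ is regular there). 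Here the quantity $\mu_p^{orb}(\F)$ in the statement is to be read as the orbifold Milnor number of $\F|_V$ at $p$: on a uniformizing chart with $\widehat V=\{y=0\}$ and $X=\sum_i a_i\,\partial_{x_i}+b\,\partial_y$ (where $b|_{\widehat V}=0$ by invariance), the lift of $\F|_V$ is generated by $\sum_i a_i(x,0)\,\partial_{x_i}$, whose local multiplicity divided by $|G_p|$ is $\mu_p^{orb}(\F|_V)$ --- in general at most $\mu_p^{orb}(\F)$, with equality only under a transversality condition.

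Granting this, Theorem \ref{teor7} applied to $\F|_V$, a foliation with isolated singularities defined by a section of $TV\otimes\mathcal{O}_V(d-1)$ on the $(n-1)$-dimensional orbifold $V$, gives
$$\sum_{p\in Sing(\F)\cap V}\mu_p^{orb}(\F|_V)=\int_V^{orb}c_{n-1}\bigl(TV\otimes\mathcal{O}_V(d-1)\bigr).$$
To evaluate the right-hand side, set $h=c_1(\mathcal{O}_{\mathbb{P}(\omega)}(1))$. The Euler sequence gives $c(T\mathbb{P}(\omega))=\prod_{i=0}^n(1+\omega_i h)$, and since $V$ is the zero locus of a section of $\mathcal{O}_{\mathbb{P}(\omega)}(d_0)$ the normal bundle sequence $0\to TV\to T\mathbb{P}(\omega)|_V\to\mathcal{O}_{\mathbb{P}(\omega)}(d_0)|_V\to0$ gives, on $V$,
$$c(TV)=\frac{\prod_{i=0}^n(1+\omega_i h)}{1+d_0 h},\qquad\text{hence}\qquad c_k(TV)=\Bigl(\sum_{\ell=0}^{k}(-1)^{\ell}\sigma_{k-\ell}(\omega)\,d_0^{\,\ell}\Bigr)h^k.$$
Applying the twisting identity $c_{n-1}(E\otimes L)=\sum_{k=0}^{n-1}c_k(E)\,c_1(L)^{\,n-1-k}$ with $E=TV$, $c_1(L)=(d-1)h$, and using $\int_V^{orb}h^{\,n-1}=d_0\int_{\mathbb{P}(\omega)}^{orb}h^{\,n}=\dfrac{d_0}{\omega_0\cdots\omega_n}$, one gathers the terms and reindexes by $j=k$ to obtain
$$\int_V^{orb}c_{n-1}\bigl(TV\otimes\mathcal{O}_V(d-1)\bigr)=\frac{1}{\omega_0\cdots\omega_n}\sum_{j=0}^{n-1}\Bigl[\sum_{k=0}^{j}(-1)^{k}\sigma_{j-k}(\omega)\,d_0^{\,k+1}\Bigr](d-1)^{\,n-1-j},$$
which is the claimed identity.

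The main obstacle is the first step: verifying that $\F$ genuinely induces an orbifold foliation on the quasi-smooth hypersurface $V$ defined by a global section of $TV\otimes\mathcal{O}_V(d-1)$ (so that Theorem \ref{teor7} is applicable on $V$), and keeping careful track of the local uniformizing groups $G_p$ of $V$ and of the precise local invariant of $\F|_V$ attached to each $p\in Sing(\F)\cap V$. Once that is settled, the remaining Chern-class computation --- the Euler and normal bundle sequences, the top-Chern twisting identity, and the symmetric-function bookkeeping --- is routine, and it is the resulting inequality, combined with Corollary \ref{sing(F)}, that is used in the proof of Theorem \ref{Theopri}.
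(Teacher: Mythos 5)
Your proposal is correct and follows essentially the same route as the paper: apply the orbifold Baum--Bott theorem (Theorem \ref{teor7}) to the foliation induced on the orbifold $V$, compute the Chern classes of $TV$ from the normal bundle sequence together with the Euler sequence, expand $c_{n-1}(TV\otimes\mathcal{O}_{\omega}(d-1)|_V)$ by the twisting formula, and evaluate $\int_V^{orb}c_1(\mathcal{O}_{\omega}(1))^{n-1}=d_0/(\omega_0\cdots\omega_n)$ via Satake--Poincar\'e duality. Your extra care in distinguishing $\mu_p^{orb}(\F)$ from $\mu_p^{orb}(\F|_V)$ addresses a point the paper leaves implicit (the statement writes $\mu_p(\F)^{orb}$, while the proof of the main theorem works with $\mathcal{I}_p(\xi|_V)$), and is worth keeping.
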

\begin{proof} It follows from theorem \ref{teor7} that
$$\sum_{p\in Sing(\mathcal{F})\cap V}
\mu_{p}(\F)^{orb}=\int_{V}^{orb} C_{n-1}(TV\otimes\mathcal{O}_{\omega}(d-1)|_{V}).$$
 In order to calculate this integral, consider the Euler sequence
$$0 \longrightarrow TV \longrightarrow T\mathbb{P}^{n}_{\omega}|_{V} \longrightarrow N_{V} \longrightarrow 0\, ,$$
where $N_V$ is the normal bundle. Then
$$C(\mathbb{P}^{n}_{\omega})=C(V)C(N_V)$$
and
\begin{eqnarray}\label{eq:1.10}
C_{j}(V)=C_{j}(\mathbb{P}^{n}_{\omega})-C_{j-1}(V)C_{1}(N_V)\,,\,\, 1
\leq j \leq n-1.
\end{eqnarray}
Moreover, by using the Euler formula  we get
\begin{eqnarray}\label{eq:1.11}
C_{j}(\mathbb{P}^{n}_{\omega})=P_{j}(\omega)C_{1}(\mathcal{O}_{\omega}(1))^j.
\end{eqnarray}
On the other hand, since
$$N_{V}=\mathcal{O}_{\mathbb{P}^{n}_{\omega}}(V)|_{V}=\mathcal{O}_{\omega}(d_0)|_{V},$$
we have that
\begin{eqnarray}\label{eq:1.12}
C_{1}(N_V)=d_{0}C_{1}(\mathcal{O}_{\omega}(1)|_{V}),
\end{eqnarray}
and replacing (\ref{eq:1.11}) and  (\ref{eq:1.12}) in (\ref{eq:1.10}), we obtain
$$C_{j}(V)=\left(\sum_{k=0}^{j}(-1)^{k}\sigma_{j}(\omega)d_{0}^{\,k}\right)\mathcal{O}_{\omega}(1)^j\,,\,\, 0 \leq j \leq n-1.$$
Therefore
\begin{align}
 &\ \hspace{-1.5cm} C_{n-1}(TV\otimes\mathcal{O}_{\omega}(d-1)|_{V}) \nonumber\\
&=\sum_{j=0}^{n-1}C_{n-1-j}(V)C_{1}(\mathcal{O}_{\omega}(d-1))^{j}\nonumber\\
&= \sum_{j=0}^{n-1}C_{n-1-j}(V)C_{1}(\mathcal{O}_{\omega}(1))^j(d-1)^{j}\nonumber\\
&=
\sum_{j=0}^{n-1}\left[\left(\sum_{k=0}^{j}(-1)^{k}\sigma_{j-k}(\omega)d_{0}^{\,k}\right)C_{1}(\mathcal{O}_{\omega}(1))^{n-1}\right](d-1)^{n-1-j}.\label{eq:13}
\end{align}
By Satake-Poincar\'e duality, since  $C_{1}(\mathcal{O}_{\omega}(d_0))$ is the  Satake-Poincar\'e dual of $V$, we obtain
\begin{align*}
\int_{V}^{orb} C_{1}(\mathcal{O}_{\omega}(1))^{n-1}
&=\int_{\mathbb{P}^{n}_{\omega}}^{orb} C_{1}(\mathcal{O}_{\omega}(1))^{n-1}\wedge C_{1}(\mathcal{O}_{\omega}(d_0))\\
&=d_{0}\int_{\mathbb{P}^{n}_{\omega}}^{orb} C_{1}(\mathcal{O}_{\omega}(1))^{n}=\frac{d_0}{\omega_{0}\cdots \omega_{n}}.\\
\end{align*}
The last integral is calculated in \cite{M} and \cite{CPM}. Finally, the result follows from  the equation (\ref{eq:13}), integrating on  the variety  $V$.
\end{proof}

\section{Proof of Theorem}

Denote by $d_0=\deg(V)$ and $d=\deg(\F)$. It follows from Corollary \ref{sing(V)} that
\begin{equation}\label{a}
\sum_{j=0}^{n-1}\left[\sum_{k=0}^{j}(-1)^{k}\sigma_{j-k}(w)d_{0}^{\,k+1}\right] (d-1)^{n-1-j} = (w_{0}\cdots w_{n})\sum_{p\,\in V /\, \xi|_{V}(p)=0}\mathcal{I}_{p}(\xi|_{V}) \geq 0.
\end{equation}

Also, by  Corollary  \ref{sing(F)}, we have
\begin{equation}\label{b}
\sum_{k=0}^{n}(d-1)^{n-k}\sigma_{k}(w)-\sum_{j=0}^{n-1}\left[\sum_{k=0}^{j}(-1)^{k}\sigma_{j-k}(w)\,d_{0}^{\,k+1}\right] (d-1)^{n-1-j} 
\end{equation}
\begin{equation*}
= (w_{0}\cdots w_{n})\left(\sum_{p\,\in \mathbb{P}^{n}_{w} /\, \xi(p)=0}\mathcal{I}_{p}(\xi)
-\sum_{p\,\in V /\, \xi|_{V}(p)=0}\mathcal{I}_{p}(\xi|_{V})\right)\geq 0.
\end{equation*}\\

Firstly, we prove for  $n=2$. It follows from equation (\ref{a}) that
$$-d_{0}^{2}+(\sigma_{1}(w)+d-1)d_0=(w_{0}w_{1}w_{2})\sum_{p\,\in V /\, \xi|_{V}(p)=0}\mathcal{I}_{p}(\xi|_{V})\,\geq 0.$$
Then $0<d_{0}\leq \sigma_{1}(w)+d-1$. We have that $V \cap Sing(\mathcal{F}) \neq \varnothing$.  In fact, suppose that $V \cap Sing(\mathcal{F}) =\varnothing$. In this case $V$
is a leaf of the foliation and we have the isomorphism of normal orbibundles $\mathcal{N}_{\F}|_V\simeq N_{V| \mathbb{P}^{2}_{\omega}}=\mathcal{O}_{\omega}(d_0)|_V$. Since the curve $V$ is a orbifold and $V \cap Sing(\mathcal{F}) =\varnothing$, it follows from \cite{CPM} that 
$\deg(\mathcal{N}_{\F}|_V)=0$.  But this is absurd since  
$$\deg(\mathcal{N}_{\F}|_V)=\deg(\mathcal{O}_{\omega}(d_0)|_V)= [\mathcal{O}_{\omega}(d_0)]\cdot V= [\mathcal{O}_{\omega}(d_0)]\cdot [\mathcal{O}_{\omega}(d_0)]= d_0^2\neq 0.$$
Observe that $V \cap Sing(\mathcal{F}) \neq \varnothing$ implied that $d_{0} \neq \sigma_{1}(w)+d-1$, 
thus  $d_{0} \leq \sigma_{1}(w)+d-2$ and this proves the Theorem for $n=2.$\\

For $n\geq 3$, in order to prove that $d_0\leq d-1+\alpha_n\sigma_1(\omega),$
consider  the polynomial  $\Psi(t)\in\mathbb{Z}[t]$ defined by
$$\Psi(t)=\sum_{j=0}^{n-1}\left(\sum_{k=0}^{j}(-1)^{k}\sigma_{j-k}(\omega)\,t^{\,k+1} \right)(d-1)^{n-1-j},$$
and define $\Omega_n(t)$ as
$$
\Omega_{n}(t)= \begin{cases}
                 \frac{d}{d t}\left(\frac{\Psi(t)}{t}\right), &  \mbox{if $n$ is even}\\
             \\  \frac{d}{d t}\,\Psi(t)   ,                   & \mbox{if $n$ is odd}. \\
             \end{cases}
$$
We claim that $\Omega_n(t)\ne 0$ for all $t\ge 0$ and $d\gg 0$. In order  to prove that, for each $m$ positive integer, we define $$P_m(t)=\sum_{j=0}^m (-1)^j (m+1-j)t^{m-j}\quad\text{and}\quad
Q_m(t)=P_m(t)-P_{m-1}(t)
$$
where  $P_0(t):=1$.

\begin{lemma}\label{Q_positivo}  Let $m$ be a positive integer. If $m$ is even then the polynomial
$Q_{m}(t)$
 is positive for all  $t\ge 0$.
\end{lemma}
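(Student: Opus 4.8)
The plan is to derive a closed form for $Q_m(t)$ and then read off its sign. The key observation is that $P_m$ is the derivative of a truncated geometric series: put $f_m(t)=\sum_{i=0}^{m+1}(-1)^i t^{m+1-i}$. Then $f_m'(t)=P_m(t)$ (the top monomial contributes $0$), and because the monomials of degree $\le m$ in $f_m$ and in $f_{m-1}$ cancel in pairs one has the identity $f_m(t)+f_{m-1}(t)=t^{m+1}$. Differentiating it gives $P_m(t)+P_{m-1}(t)=(m+1)t^m$, hence
$$Q_m(t)=P_m(t)-P_{m-1}(t)=2P_m(t)-(m+1)t^m=2f_m'(t)-(m+1)t^m.$$

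Next I would substitute the summed form $f_m(t)=\frac{t^{m+2}-(-1)^m}{t+1}$, differentiate, and simplify; a short routine computation yields
$$Q_m(t)=\frac{(m+1)t^{m+2}+2t^{m+1}-(m+1)t^m+2(-1)^m}{(t+1)^2}.$$
For $m$ even this reads $Q_m(t)=h(t)/(t+1)^2$ with
$$h(t)=(m+1)t^{m+2}+2t^{m+1}-(m+1)t^m+2=(m+1)t^m(t^2-1)+2\bigl(t^{m+1}+1\bigr),$$
so the lemma reduces to showing $h(t)>0$ for every $t\ge 0$.

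The case $t\ge 1$ is immediate: then $(m+1)t^m(t^2-1)\ge 0$ while $2(t^{m+1}+1)>0$. For $0\le t\le 1$ I would write $h(t)=2+2t^{m+1}-(m+1)t^m(1-t^2)$ and bound the subtracted term from above, using $1-t^2=(1-t)(1+t)\le 2(1-t)$ together with the elementary estimate $t^m(1-t)\le\frac1{m+1}\bigl(\frac{m}{m+1}\bigr)^m$ on $[0,1]$ (the maximum being attained at $t=\frac{m}{m+1}$). These combine to $(m+1)t^m(1-t^2)\le 2\bigl(\frac{m}{m+1}\bigr)^m<2$, whence $h(t)>2t^{m+1}\ge 0$, i.e. $h(t)>0$. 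This would complete the argument, modulo the routine computations.

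The only delicate point is the estimate on $[0,1]$: what is needed is the sharp constant $2$, precisely because it is what forces strict positivity, and the substitution $1-t^2\le 2(1-t)$ followed by the textbook bound for $t^m(1-t)$ delivers it with a strict margin coming from $(m/(m+1))^m<1$. Everything else is bookkeeping. I note that the parity hypothesis cannot be dropped: for odd $m$ the numerator of $Q_m$ has constant term $2(-1)^m=-2$, so $Q_m(0)<0$.
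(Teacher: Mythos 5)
Your argument is correct and reduces the lemma to exactly the same polynomial as the paper's proof, namely $(t+1)^2Q_m(t)=(m+1)t^{m+2}+2t^{m+1}-(m+1)t^m+2$, whose positivity on $[0,1]$ is the only nontrivial point in either treatment (the case $t\ge 1$ being immediate in both). The remaining difference is cosmetic: the paper finds the interior critical point $t=m/(m+2)$ of this polynomial and evaluates there, while you bound the negative term using the maximum of $t^m(1-t)$ at $t=m/(m+1)$; both yield the same strict margin coming from $(m/(m+1))^m<1$ (resp.\ $(m/(m+2))^m<1$).
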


\proof
The result is equivalent to prove that the polynomial
$$F(t)=(t+1)^2Q_m(t)=(m+1)t^{m+2}+2t^{m+1}-(m+1)t^m+2$$
is positive for all $t\ge 0$. Observe that $F(0)=2$ and $F(t)\ge 4$ for all $t\ge 1$, then it is enough to prove that $F(t)$ is positive in $[0,1]$.
Since
$$F'(t)=(m+1)t^{m-1}((m+2)t^2+2t-m),$$
the  critical points of $Q(t)$ are $t=-1$, $t=0$ and $t=\dfrac m{m+2}$. In addition
\begin{align*}
F\left(\frac m{m+2}\right)&=\left(\frac m{m+2}\right)^m\left((m+1)\left(\frac m{m+2}\right)^2+2\left(\frac m{m+2}\right)-(m+1)\right)+2\\
&=\left(\frac m{m+2}\right)^m \left(\frac {-m^2-4m-4}{(m+2)^2}\right)+2\\
&>\left(\frac {-m^2-4m-4}{(m+2)^2}\right)+2=\frac {4(m+1)}{(m+2)^2}>0.
\end{align*}
Therefore
$Q_m(t)\ge \dfrac {4(m+1)}{(m+2)^2(t+1)^2}>0$ for all $t\ge 0$.

\begin{lemma}\label{desig_simetricas}  For all  $k\ge 1$ we have
$\dfrac {\sigma_{k+1}(\omega)}{\sigma_k(\omega)}< \dfrac {\sigma_1(\omega)}{k+1}.$
\end{lemma}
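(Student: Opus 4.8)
The plan is to deduce the claim from the stronger ``denominator‑free'' inequality
$$(k+1)\,\sigma_{k+1}(\omega) < \sigma_1(\omega)\,\sigma_k(\omega)\qquad(1\le k\le n).$$
Since every $\sigma_k(\omega)$ is strictly positive (each $\omega_i\ge 1$, so $\sigma_k(\omega)$ is a nonempty sum of products of positive numbers for $0\le k\le n+1$), dividing by $(k+1)\sigma_k(\omega)>0$ gives exactly $\dfrac{\sigma_{k+1}(\omega)}{\sigma_k(\omega)}<\dfrac{\sigma_1(\omega)}{k+1}$. The remaining value $k=n+1$ is immediate, because then $\sigma_{k+1}(\omega)=0$ while $\sigma_1(\omega)/(k+1)>0$ (and for $k\ge n+2$ the ratio $\sigma_{k+1}/\sigma_k$ does not occur in the paper).

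To obtain the displayed inequality I would expand the right‑hand side combinatorially. Writing $\sigma_k(\omega)=\sum_{|S|=k}\prod_{j\in S}\omega_j$ with $S$ ranging over the $k$‑element subsets of $\{0,\dots,n\}$, I split
$$\sigma_1(\omega)\,\sigma_k(\omega)=\sum_{i=0}^{n}\ \sum_{|S|=k}\omega_i\prod_{j\in S}\omega_j
=\sum_{i\notin S}\omega_i\prod_{j\in S}\omega_j\ +\ \sum_{i\in S}\omega_i\prod_{j\in S}\omega_j.$$
In the first sum $\omega_i\prod_{j\in S}\omega_j=\prod_{j\in S\cup\{i\}}\omega_j$ ranges over the $(k+1)$‑subsets $T=S\cup\{i\}$, and each such $T$ is produced in exactly $k+1$ ways (one for each choice of the distinguished element $i\in T$), so the first sum equals $(k+1)\sigma_{k+1}(\omega)$. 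The second sum equals $\sum_{i}\omega_i^{2}\sum_{S\ni i,\,|S|=k}\prod_{j\in S\setminus\{i\}}\omega_j$, a sum of products of positive reals, hence strictly positive precisely when there is at least one $k$‑subset containing a fixed index, i.e. when $1\le k\le n$. Adding the two pieces gives the strict inequality.

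I do not expect a genuine obstacle here; the only points needing care are the counting identity (each $(k+1)$‑subset appearing with multiplicity $k+1$) and the degenerate ranges of $k$ — namely that $\sigma_k(\omega)>0$ so the division is legitimate, and that the leftover positive sum is nonempty exactly for $1\le k\le n$. If one prefers a more ``structural'' argument, an alternative is to normalize $p_k:=\sigma_k(\omega)/\binom{n+1}{k}$ and invoke Newton's inequalities $p_k^{2}\ge p_{k-1}p_{k+1}$, which make $p_{k+1}/p_k$ nonincreasing in $k$ and hence $\le p_1=\sigma_1(\omega)/(n+1)$; then $\dfrac{\sigma_{k+1}(\omega)}{\sigma_k(\omega)}=\dfrac{p_{k+1}}{p_k}\cdot\dfrac{n+1-k}{k+1}\le\dfrac{n+1-k}{k+1}\cdot\dfrac{\sigma_1(\omega)}{n+1}<\dfrac{\sigma_1(\omega)}{k+1}$, the last step using $k\ge 1$. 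I would keep the combinatorial proof as the main one, since it is self‑contained and yields strictness transparently.
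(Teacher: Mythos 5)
Your argument is correct and is essentially the paper's own proof: both establish the identity $\sigma_1(\omega)\sigma_k(\omega)=(k+1)\sigma_{k+1}(\omega)+\sum_i\omega_i^2\sigma_{k-1}(\check\omega_i)$ (your split over $i\notin S$ versus $i\in S$ is exactly the paper's expansion $\sigma_k(\omega)=\omega_i\sigma_{k-1}(\check\omega_i)+\sigma_k(\check\omega_i)$ summed against $\omega_i$) and then discard the strictly positive remainder. Your extra care about the degenerate ranges of $k$ and the positivity of $\sigma_k(\omega)$ is a welcome tightening of details the paper leaves implicit, but the route is the same.
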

\proof Let  $\check\omega_I$ be  the tuple
$(\omega_1,\dots,\omega_n)$ omitting the coordinates $\omega_i$'s
with $i\in I$. Observe that

\begin{align*}
\sigma_k(\omega)\sigma_1(\omega)&=\sum_{i=1}^n \omega_i\sigma_k(\omega)=\sum_{i=1}^n \omega_i\left( \omega_i \sigma_{k-1}(\check \omega_i)+\sigma_k(\check\omega_i)\right)\\
&=\sum_{i=1}^n  \omega_i^2 \sigma_{k-1}(\check \omega_i)+\sum_{i=1}^n  \omega_i\sigma_k(\check \omega_i))\\
&=\sum_{i=1}^n  \omega_i^2 \sigma_{k-1}(\check\omega_i)+(k+1) \sigma_{k+1}(\omega)).\\
\end{align*}
 Thus, we get that
$$\frac {\sigma_{k+1}(\omega)}{\sigma_k(\omega)}= \frac {\sigma_1(\omega)}{k+1}- \frac {\sum\limits_{i=1}^n  \omega_i^2 \sigma_{k-1}(\check \omega_i)}{(k+1)\sigma_k(\omega)}
<   \frac {\sigma_1(\omega)}{k+1}.\qed$$

\begin{proposition}\label{positividade}
Suppose that $d\ge \sigma_1(\omega)+1$, then the polynomial
$(-1)^{n-1}\Omega_n(t)$ is positive for all $t\ge 0$.
\end{proposition}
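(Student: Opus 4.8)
The plan is to compute the two derivatives in the definition of $\Omega_n$ explicitly, to recognize $(-1)^{n-1}\Omega_n$ as an alternating combination of (homogenizations of) the polynomials $P_m$, and then to dispatch it pair by pair with Lemmas~\ref{Q_positivo} and~\ref{desig_simetricas}. Put $s:=d-1$ and reindex the double sum defining $\Psi$ by the exponent $i$ carried by $\sigma_i(\omega)$; this gives
\[
\Psi(t)=t\sum_{i=0}^{n-1}\sigma_i(\omega)\,G_{n-1-i}(t,s),\qquad
G_m(t,s):=\sum_{k=0}^{m}(-1)^k s^{m-k}t^k=\frac{(-1)^m t^{m+1}+s^{m+1}}{t+s}.
\]
Writing $\widehat{P}_m(t,s):=s^m P_m(t/s)$ for the homogenization of $P_m$ (so $\widehat{P}_0=1$), a direct differentiation yields $\frac{d}{dt}\bigl(tG_m(t,s)\bigr)=(-1)^m\widehat{P}_m(t,s)$ and $\frac{d}{dt}G_m(t,s)=(-1)^m\widehat{P}_{m-1}(t,s)$. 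Plugging these into the two cases of the definition of $\Omega_n$ and collecting signs, one obtains in both parities the uniform formula
\[
(-1)^{n-1}\Omega_n(t)=\sum_{i=0}^{N}(-1)^i\sigma_i(\omega)\,\widehat{P}_{N-i}(t,s),
\]
where $N=n-1$ if $n$ is odd and $N=n-2$ if $n$ is even (in the even case the $i=n-1$ term disappears since $\frac{d}{dt}G_0=0$). In either case $N$ is even, so the sum has an odd number $N+1$ of terms.

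I would then record two elementary positivity facts about these polynomials on $\{t\ge 0,\ s>0\}$, both in the spirit of Lemma~\ref{Q_positivo}: for $m$ even,
\[
\widehat{Q}_m(t,s):=\widehat{P}_m(t,s)-s\,\widehat{P}_{m-1}(t,s)=s^m Q_m(t/s)>0,
\]
which is just Lemma~\ref{Q_positivo} homogenized; and, also for $m$ even, $\widehat{P}_m(t,s)>0$, which follows from $2P_m(t)=(m+1)t^m+Q_m(t)$ (equivalently from $(1+t)^2P_m(t)=1+(m+2)t^{m+1}+(m+1)t^{m+2}$, proved exactly as Lemma~\ref{Q_positivo}).

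Now split the $N+1$ terms into the $N/2$ consecutive pairs $(2r,2r+1)$, $r=0,\dots,N/2-1$, together with the single leftover term $i=N$, which equals $\sigma_N(\omega)\widehat{P}_0(t,s)=\sigma_N(\omega)>0$. For a pair, with $m:=N-2r$ even and using $\widehat{P}_m=\widehat{Q}_m+s\,\widehat{P}_{m-1}$,
\[
\sigma_{2r}(\omega)\widehat{P}_m(t,s)-\sigma_{2r+1}(\omega)\widehat{P}_{m-1}(t,s)
=\sigma_{2r}(\omega)\widehat{Q}_m(t,s)+\bigl(\sigma_{2r}(\omega)\,s-\sigma_{2r+1}(\omega)\bigr)\widehat{P}_{m-1}(t,s).
\]
Here $\sigma_{2r}(\omega)\,s-\sigma_{2r+1}(\omega)\ge 0$: for $r=0$ this is exactly the hypothesis $s=d-1\ge\sigma_1(\omega)$, and for $r\ge 1$ it follows from Lemma~\ref{desig_simetricas}, since $\sigma_{2r+1}(\omega)/\sigma_{2r}(\omega)<\sigma_1(\omega)/(2r+1)\le\sigma_1(\omega)\le s$. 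If $\widehat{P}_{m-1}(t,s)\ge 0$ the right-hand side is a sum of two non-negative terms, the first strictly positive; if $\widehat{P}_{m-1}(t,s)<0$ then the pair is bounded below by $\sigma_{2r}(\omega)\widehat{P}_m(t,s)-\sigma_{2r+1}(\omega)\widehat{P}_{m-1}(t,s)\ge\sigma_{2r}(\omega)\widehat{P}_m(t,s)>0$ using the positivity of $\widehat{P}_m$ for even $m$. So every pair is positive, and adding the positive leftover term gives $(-1)^{n-1}\Omega_n(t)>0$ for all $t\ge 0$.

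There is no deep obstacle here; the real work is the bookkeeping in the first step — the reindexing and the careful tracking of signs through the odd/even cases — together with the one auxiliary statement not already in hand, namely $\widehat{P}_m>0$ for even $m$, which is the natural companion of Lemma~\ref{Q_positivo}. Everything else reduces to Lemmas~\ref{Q_positivo} and~\ref{desig_simetricas}, and the hypothesis $d\ge\sigma_1(\omega)+1$ is used only in the first pair $(i=0,\,i=1)$.
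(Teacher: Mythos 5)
Your proof is correct and follows essentially the same route as the paper: both reduce $(-1)^{n-1}\Omega_n$ to an alternating sum of the (homogenized) polynomials $P_m$, pair consecutive even--odd indices leaving the constant term $\sigma_N(\omega)$ aside, compare $\sigma_{2r+1}(\omega)$ with $\sigma_{2r}(\omega)(d-1)$ via Lemma \ref{desig_simetricas} and the hypothesis, and conclude positivity of each pair from Lemma \ref{Q_positivo} together with the identity $P_m+P_{m-1}=(m+1)t^m$ (your auxiliary fact $\widehat{P}_m>0$ for $m$ even is the paper's bound $P_m-|P_{m-1}|\ge\min\{(m+1)s^m,Q_m(s)\}$ in disguise). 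The only differences are presentational: you split on the sign of $\widehat{P}_{m-1}$ instead of using absolute values, and you merge the two parities into one formula rather than repeating the argument.
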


\proof : 
First,  we consider the case when $n$ is odd. We have  that
\begin{align*}
\Omega_n(t)&=\sum_{j=0}^{n-1}\left(\sum_{k=0}^{j}(-1)^{k}\sigma_{j-k}(\omega)(k+1)t^{k} \right)(d-1)^{n-1-j}\\
&=\sum_{l=0}^{n-1} \sigma_l(\omega)\left(\sum_{k=0}^{n-1-l} (-1)^k(k+1) t^k (d-1)^{n-1-k-l}\right)\\
&=\sum_{l=0}^{n-1} \sigma_l(\omega)(d-1)^{n-1-l}\left(\sum_{k=0}^{n-1-l} (-1)^k (k+1)s^k\right)\\
&=\sum_{l=0}^{n-1} \sigma_l(\omega)(d-1)^{n-1-l}(-1)^{n-1-l}P_{n-1-l}(s)\\
\end{align*}
where $s=\frac t{d-1}$.

We can write 
$\Omega_n(t)-\sigma_{n-1}(\omega)$  as
$$= \sum_{j=0}^{\frac {n-3}2}(d-1)^{n-2-2j}\left(
\sigma_{2j}(\omega)(d-1)P_{n-1-2j}(s)-
\sigma_{2j+1}P_{n-2-2j}(s)\right).
$$
On the other hand, by Lemma \ref{desig_simetricas} and the
hypothesis,  we know that $$\sigma_{2l}(\omega)(d-1)\ge
\sigma_{2l}(\omega)\sigma_1(\omega)\ge \sigma_{2l+1}(\omega).$$ It
follows that
\begin{align*}
\sigma_{2j}(\omega)(d-1)P_{n-1-2j}(s)& -\sigma_{2j+1}P_{n-2-2j}(s)\ge \\
&\ge\sigma_{2j}(\omega)(d-1)P_{n-1-2j}(s)- \sigma_{2j+1}\left|P_{n-2-2j}(s)\right|\\
&\ge\sigma_{2j}(\omega)(d-1)\left(P_{n-1-2j}(s)-\left|P_{n-2-2j}(s)\right|\right)\\
&\ge\sigma_{2j}(\omega)(d-1)\min\left\{(n-2j)s^{n-1-2j}, Q_{n-1-2j}(s)\right\}\\
&>0.
\end{align*}
This last inequality follows from Lemma \ref{Q_positivo}. Therefore
$\Omega_n(t)> \sigma_{n-1}(\omega)$ for all $t\ge 0$.

In the case $n$ even,  similarly we obtain that
\begin{align*}
\Omega_n(t)& =\sum_{l=0}^{n-1} \sigma_l(\omega)(d-1)^{n-2-l}(-1)^{n-1-l}P_{n-2-l}(s)\\
&=
\sum_{j=0}^{\frac {n-2}2}(d-1)^{n-2-2j}\left(
-\sigma_{2j}(\omega)(d-1)P_{n-2-2j}(s)+
\sigma_{2j+1}P_{n-3-2j}(s)\right), 
\end{align*}
and by the  previous argument, we have that  each term of this sum is negative, therefore $\Omega_n(t)< 0$ for all $t\ge 0$.
\qed \\

Now, in order to  finish the proof of Theorem, we have two case to consider:

\subsection{$n$ odd:}
It follows from  Proposition \ref{positividade} that
$\Psi'(t)=\Omega_{n}(t)> 0$ for all $ t \in \mathbb{R}^{+}$. Then,
$\Psi$ is a increasing function and by equation (\ref{b}), $\Psi(d_0)\leq\sum\limits_{l=0}^{n}(d-1)^{n-l}\sigma_{l}(w)$.
We claim that
$\Psi\left(d-1+\alpha_n\sigma_1(\omega)\right)>
\sum\limits_{l=0}^{n}(d-1)^{n-l}\sigma_{l}(w)$. In fact,  since
\begin{align*}
\Psi(t)& =\sum_{j=0}^{n-1}\left(\sum_{k=0}^{j}(-1)^{k}\sigma_{j-k}(\omega)\,t^{k+1} \right)(d-1)^{n-1-j}\\
&=\sum_{l=0}^{n-1} \sigma_l(\omega)(d-1)^{n-l} \sum_{k=0}^{n-1-l}(-1)^k\left(\frac t{d-1}\right)^{k+1}\\
&=\frac t{d-1+t} \sum_{l=0}^{n-1} \sigma_l(\omega)(d-1)^{n-l}\left(1-\left(\frac {-t}{d-1}\right)^{n-l}\right)\\
&=\frac t{d-1+t} \sum_{l=0}^{n-1} \sigma_l(\omega)((d-1)^{n-l}-(-t)^{n-l}),
\end{align*}
then, putting  $t=d-1+\alpha_n\sigma_1(\omega)$
 and using that $\sigma_{2k+1}(\omega)<\frac 1{2k}\cdot  \sigma_{2k}(\omega)\sigma_1(\omega)$ for each $k\ge 1$, we have that
\begin{align*}
-\Psi(t)+&\sum\limits_{l=0}^{n}\sigma_{l}(w)(d-1)^{n-l}=\\
&= \sigma_n(\omega) +\frac 1{d-1+t} \sum_{l=0}^{n-1} \sigma_l(\omega)((d-1)^{n-l+1}-(-t)^{n-l+1})\\
&=\sigma_n(\omega)+\sigma_{n-1}(\omega)\frac {(d-1)^2-t^2}{d-1+t}+\\
&\hspace{1cm}+\frac 1{d-1+t} \sum_{k=0}^{\frac{n-3}2} \left( \sigma_{2k}(\omega)((d-1)^{n-2k+1}-t^{n-2k+1}) \right.\\
&\hspace{55mm}\left.+
\sigma_{2k+1}(\omega)((d-1)^{n-2k}+t^{n-2k})\right)\\
&<\frac 1{d-1+t} \sum_{k=1}^{\frac{n-3}2} \sigma_{2k}(\omega)\left( (d-1)^{n-2k}\left(d-1+\frac 1{2k}\sigma_1(\omega)\right)-t^{n-2k}\left(t-\frac 1{2k}\sigma_1(\omega)\right)\right)\\
&\hspace{4cm}
+(d-1)^{n+1}-t^{n+1}+\sigma_{1}(\omega)(d-1)^{n}+\sigma_1(\omega)t^{n}.
\end{align*}
Then, in order to conclude the proof, it is enough to show that each term of this summation is less or equal  to zero, or equivalently,  putting $s=d-1$, $\sigma=\sigma_1(\omega)$, we have to prove that
\begin{enumerate}[(I)]
\item 
$\left(\dfrac t{s}\right)^{n-2k}\ge \dfrac{s+\frac 1{2k}\sigma}{t-\frac 1{2k}\sigma}$
for each $k=1,\dots, \frac {n-3}2$  and
\item  
$t^n(t-\sigma)\ge s^n(s+\sigma)$.
\end{enumerate}

For item (I), the case $n=3$ is empty, then we can suppose that $n\ge 5$. By Bernoulli's inequality, we have
$$\left(\dfrac t{s}\right)^{n-2k}=\left(1+\frac{\alpha_n \sigma}{s}\right)^{n-2k}\ge 1+\frac{(n-2k)\alpha_n \sigma}{s},$$
so, it is enough to prove that right side of this inequality  is greater that 
$$\dfrac{s+\frac 1{2k}\sigma}{t-\frac 1{2k}\sigma}=1+\frac {(\frac 1k -\alpha_n)\sigma}{s+(\alpha_n-\frac 1{2k})\sigma}.$$
In fact, if $n\ge 7$, then $\alpha_n\ge \frac 2n$ and
\begin{align*}
(n-2k)\alpha_n&\left(s+\left(\alpha_n-\frac 1{2k}\right)\sigma\right)-\left(\frac 1k -\alpha_n\right)s\\
&>(n-2k)\alpha_n\left(s-\frac 1{2k}\sigma\right)-\left(\frac 1k -\alpha_n\right)s\\
&>\left((n-2k)\alpha_n\frac {2k-1}{2k}-\frac 1k +\alpha_n\right)s\\
&>\left(\frac n2\alpha_n-1\right)s>0,\\
\end{align*}
and when $n=5$, it follows that $k=1$ and
$$(n-2k)\alpha_n\left(s+\left(\alpha_n-\frac 1{2k}\right)\sigma\right)-\left(\frac 1k -\alpha_n\right)s>\left( \frac 52\alpha_5+3\alpha_5^2\right)s>0.$$

 Finally, we are going to prove (II). Making $U:=\frac s{\sigma}>1$, observe that
\begin{align*}
t^n(t-&\sigma)- s^n(s+\sigma)\\
&=(s+\alpha_n\sigma)^n(s+(\alpha_n-1)\sigma)-s^{n+1}-\sigma s^n\\
&=\sigma^{n+1}\left((U+\alpha_n)^n(U+\alpha_n-1)-U^{n+1}-U^n\right)\\
&=\sigma^{n+1}\left(((n+1)\alpha_n-2)U^n+\sum_{j=1}^{n+1} \left(\binom nj \alpha_n^j+\binom n{j-1} \alpha_n^{j-1}(\alpha_n-1)\right) U^{n-j+1}\right)\\
&=\sigma^{n+1}\left(((n+1)\alpha_n-2)U^n+\sum_{j=0}^{n+1} \left(\frac {n+1}j \alpha_n-1\right)  \binom n{j-1}\alpha_n^{j-1}U^{n-j+1}\right)
\end{align*}
Now, the polynomial $$F(X)=((n+1)\alpha_n-2)X^n+\sum\limits_{j=0}^{n+1} \left(\frac {n+1}j \alpha_n-1\right) \binom n{j-1} \alpha_n^{j-1}X^{n-j+1}$$ and its derivate $F'(X)$   satisfy  that
\begin{itemize}
\item  the  leading  coefficient  is positive,
\item the list of other coefficients is a decreasing sequence, and then it   only has one change of sign,
\item $F(0)<0$ and $F'(0)<0$,
\item $F(1)=(1+\alpha_n)^n\alpha_n-2=0$.
\end{itemize}
 Then, by Descartes' rule of signs, $F'(X)$ only has one positive root and that root  is in the interval $(0,1)$, thus $F(X)$ is an increasing function in $(1,\infty)$. Therefore
$$t^n(t-\sigma)- s^n(s+\sigma)=\sigma^{n+1}F(U)\ge \sigma^{n+1}F(1)=0,$$
as we want to prove. \qed

\subsection{$n$ even} By the equation (\ref{a}) we know that
$\Psi(d_0) \geq 0.$
Therefore, if we  define $\Phi (t) = \frac{\Psi(t)}{t}$, then $\Phi(d_0)\ge 0$ and from  Proposition \ref{positividade} we have that  $\Phi(t)$ is a decreasing function.

We claim that $\Phi(d-1+\alpha_n\sigma_1(\omega))<0$, and therefore $d_0< d-1+\alpha_n\sigma_1(\omega)$.
In fact, following the same procedure using previously, we have that
\begin{align*}
\Phi(t)&=\frac 1{d-1+t} \sum_{l=0}^{n-1} \sigma_l(\omega)((d-1)^{n-l}-(-t)^{n-l})\\
&=\frac 1{d-1+t} \sum_{k=0}^{\frac{n-2}2} \left( \sigma_{2k}(\omega)((d-1)^{n-2k+1}-t^{n-2k+1}) \right.\\
&\hspace{50mm}\left.+
\sigma_{2k+1}(\omega)((d-1)^{n-2k}+t^{n-2k})\right)
\end{align*}
From here, the same argument of the case odd works.\qed


\vskip 0.5cm


\vskip 0.5cm


\vskip 0.5cm

\

\

\end{document}